\newtheorem{theorem}{Theorem}[section]
\newtheorem{lemma}[theorem]{Lemma}
\theoremstyle{definition}
\newtheorem{remark}[theorem]{Remark}
\title{Presentations for subsemigroups of $PD_n$}
\author{Abdullahi Umar}
\address{Department of Mathematics, College of Arts and Sciences, Petroleum Institute, P. O. Box 2533,
Abu Dhabi, U. A. E.}
\email{\texttt{\{aumar\}@pi.ac.ae}}
\begin{document}

\begin{abstract}
Let $[n]=\{1,\ldots,n\}$ be the $n$-chain.
We give presentations for the following transformation semigroups: the semigroup of full order-decreasing mappings of $[n]$, the semigroup of partial one-to-one order-decreasing mappings of $[n]$, the semigroup of full order-preserving and order-decreasing mappings of $[n]$, the semigroup of partial one-to-one order-preserving and order-decreasing mappings of $[n]$, and the semigroup of partial order-preserving and order-decreasing mappings of $[n]$.
\end{abstract}

\subjclass{\textit{20M20, 20M30.}}

\keywords{Presentations, order-decreasing, order-preserving, transformation semigroups}

\maketitle

\section{Introduction and Preliminaries}

This was supposed to be joint article with Dr. Victor Maltcev, but he does not want to be named as a coauthor. There are many well-known presentations for various finite permutation groups, for example, the famous Coxeter presentation for the symmetric group. After tools of Semigroup Theory were developed, semigroup theorists became interested in finding presentations for finite transformation semigroups. Some of the early works on this go back to the start of earnest investigation as to what semigroup presentations are, see~\cite{Nik,Nik-thesis}.
Later some papers appeared on presentations for order-preserving mappings, see~\cite{Vitor,Vitor2}. Inspired by the Coxeter groups and Brauer-type semigroups, there also appeared presentations for various subsemigroups of the composition monoid in ~\cite{Ania} and~\cite{Mazor}. A substantial amount of work to find presentations for various transformation-like monoids has been done in the papers~\cite{trio}--\cite{East1}.

In general, the big motivation for finding presentations for semigroups lies in the fact that knowing their presentations is really helpful to construct various type of representations of those semigroups, see~\cite{SashaVolodia} and~\cite{Stei}. But for us the main objective in this paper is purely combinatorial -- we are going to provide presentations for various subsemigroups of $PD_n$, which we now define.

Consider an $n$-chain, i.e. $[n]=\{1,\ldots,n\}$ together with the the natural linear order on it. Let $P_n$ denote the semigroup of all partial mapping on $[n]$. By an \emph{order-decreasing} mapping we shall mean a partial mapping $\alpha\in P_n$ such that $x\alpha\leq x$ for all $x\in\mathrm{dom}(\alpha)$; and by an \emph{order-preserving} mapping we shall mean a partial mapping $\alpha\in P_n$ such that $x\alpha\leq y\alpha$ for all $x\leq y$ from the domain of $\alpha$. Let
\begin{itemize}
\item
$PD_n$ be the semigroup of all partial order-decreasing mappings of the chain $[n]$;
\item
$D_n$ be the semigroup of all full order-decreasing mappings of $[n]$;
\item
$PC_n$ be the semigroup of all partial order-preserving and order-decreasing mappings of $[n]$;
\item
$C_n$ be the semigroup of all full order-preserving and order-decreasing mappings of $[n]$;
\item
$IC_n$ be the semigroup of all partial one-to-one order-preserving and order-decreasing mappings of $[n]$;
\item
$ID_n$ be the semigroup of all partial one-to-one order-decreasing mappings of $[n]$.
\end{itemize}
The subsemigroups of $PD_n$ and $D_n$ are well-studied, one should consult~\cite{Higgins},
\cite{Abdullahi}, \cite{Abdullahi2}, \cite{Abdullahi-thesis}.

Now section by section we will provide the presentations for these semigroups. But prior to that, we recall what semigroup presentations are: Let $A$ be a finite set, and $A^{\ast}$ be the free monoid generated by $A$, i.e. the set of all words over $A$ under concatenation. Let $R\subseteq A^{\ast}\times A^{\ast}$.
Then by the (monoid) \emph{presentation} $\langle A:R\rangle$ we mean the semigroup $A^{\ast}/\rho$, where $\rho$ is the congruence on $A^{\ast}$, defined as follows: for $u,v\in A^{\ast}$, we have $u\rho v$ if and only if there exist $x_0,x_1\ldots,x_n\in A^{\ast}$ such that $u=x_0\sim x_1\sim\cdots\sim x_{n-1}\sim x_n=v$, where by $x\sim y$ for $x,y\in A^{\ast}$ we mean that there exists $(u,v)\in R$ and $\alpha,\beta\in A^{\ast}$ such that either $x=\alpha u\beta$ and $y=\alpha v\beta$, or $x=\alpha v\beta$ and $y=\alpha u\beta$. For more detail on monoid presentations we refer the reader to the two theses~\cite{Nik-thesis} and~\cite{Me}.

\section{Presentation for $D_n$}

A general study of the semigroups $D_n$ and $PD_n$ was initiated in \cite{Abdullahi-thesis} and they
arise in language theory \cite{Higgins}.

Prior to stating the presentation for $D_n$, let us discuss its natural generating set. For $1\leq i<j\leq n$ let $f_{i,j}$ be the idempotent in $D_n$ which maps $j$ to $i$ and fixes all remaining points. Then one checks that the $f_{i,j}$-s generate $D_n$ (see~\cite{Abdullahi}) and are subject to the following relations (under the mapping $e_{i,j}\mapsto f_{i,j}$):
\begin{eqnarray}
e_{i,j}^2 &=& e_{i,j}\label{eq:1}\\
e_{i,j}e_{k,l}&=&e_{k,l}e_{i,j}\quad\mathrm{if}~\{i,j\}\cap\{k,l\}=\varnothing\label{eq:2}\\
e_{i,j}e_{i,k}&=&e_{j,k}e_{i,j}\quad i<j<k\label{eq:3}\\
e_{i,k}e_{i,j}&=&e_{j,k}e_{i,j}\quad i<j<k\label{eq:4}\\
e_{i,k}e_{j,k}&=&e_{i,k}~\quad\quad i,j<k.\label{eq:5}
\end{eqnarray}
We will prove now that the monoid $M_n$, presented by~\eqref{eq:1}--\eqref{eq:5}, is isomorphic to $D_n$.
Firstly, we start with
\begin{lemma}\label{lm:simple-but-great}
Let $i<j$ and $k<l$. If $l<j$, then we can select $p,q,r$ such that $p<q$, $r<j$, $\{p,q,r\}=\{i,k,l\}$ and $e_{i,j}e_{k,l}=e_{p,q}e_{r,j}$.
\end{lemma}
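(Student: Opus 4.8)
The plan is to reduce the claim to a short case analysis governed by how the pairs $\{i,j\}$ and $\{k,l\}$ overlap. First I would record the elementary order information forced by the hypotheses: since $k<l$ and $l<j$ we have $k<l<j$, and combined with $i<j$ this makes $j$ strictly larger than each of $i,k,l$. In particular $j\notin\{k,l\}$, so the only way the two pairs can meet is through $i$; thus exactly one of three mutually exclusive situations holds: $i\notin\{k,l\}$, or $i=l$, or $i=k$. The goal in every case is to rewrite $e_{i,j}e_{k,l}$ so that the letter $e_{r,j}$ carrying the largest index $j$ sits on the right, which is exactly what the target form $e_{p,q}e_{r,j}$ with $r<j$ demands.

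In the disjoint case $i\notin\{k,l\}$ the pairs are disjoint, so relation \eqref{eq:2} applies directly and gives $e_{i,j}e_{k,l}=e_{k,l}e_{i,j}$; here I take $(p,q,r)=(k,l,i)$, and $p<q$, $r<j$, $\{p,q,r\}=\{i,k,l\}$ are immediate. When $i=l$ the product is $e_{l,j}e_{k,l}$ with $k<l<j$, and I would read relation \eqref{eq:3} from right to left under the substitution sending its triple $i<j<k$ to $k<l<j$: this identifies $e_{l,j}e_{k,l}$ as the right-hand side of \eqref{eq:3} and rewrites it as $e_{k,l}e_{k,j}$, so $(p,q,r)=(k,l,k)$ works. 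When $i=k$ the product is $e_{i,j}e_{i,l}$ with $i<l<j$; applying \eqref{eq:4} gives $e_{i,j}e_{i,l}=e_{l,j}e_{i,l}$, while \eqref{eq:3} gives $e_{i,l}e_{i,j}=e_{l,j}e_{i,l}$, and combining the two yields the commutation $e_{i,j}e_{i,l}=e_{i,l}e_{i,j}$, so that $(p,q,r)=(i,l,i)$ does the job.

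In each case I would finish by checking the three bookkeeping conditions $p<q$, $r<j$ and $\{p,q,r\}=\{i,k,l\}$, noting that in the two overlapping cases this set has only two elements and the repetition among $p,q,r$ is precisely what records the coincidence $i=l$ or $i=k$. The only genuinely delicate point is orienting relations \eqref{eq:3} and \eqref{eq:4}: their shared-index patterns are stated with the common index appearing as the \emph{smaller} subscript in both letters, whereas in the products we meet, the index $l$ appears as the larger subscript of $e_{k,l}$ and the smaller subscript of $e_{l,j}$. The main obstacle is therefore matching subscripts correctly---recognizing that $e_{l,j}e_{k,l}$ is the already-reduced right-hand side of \eqref{eq:3} rather than a word to which \eqref{eq:3} is applied left to right, and that the case $i=k$ requires \emph{both} \eqref{eq:3} and \eqref{eq:4} to route $e_{i,j}e_{i,l}$ through the common intermediate word $e_{l,j}e_{i,l}$.
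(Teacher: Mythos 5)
Your proposal is correct and follows essentially the same route as the paper: the same three-way case split (pairs disjoint, $i=l$, $i=k$), with relation~\eqref{eq:2} handling the disjoint case, relation~\eqref{eq:3} read right-to-left handling $i=l$, and the combination of~\eqref{eq:4} and~\eqref{eq:3} through the intermediate word $e_{l,j}e_{i,l}$ handling $i=k$, exactly as in the paper's proof. Your added remarks on orienting the relations and on the multiset $\{p,q,r\}$ collapsing in the overlap cases are sound but do not change the argument.
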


\begin{proof}
Let $l<j$.
If $\{k,l\}\cap\{i,j\}=\varnothing$, then the claim follows from~\eqref{eq:2}. If $\{k,l\}\cap\{i,j\}\neq\varnothing$, then one of $k$ and $l$ must coincide with $i$. If $l=i$, then $k<l=i<j$ and so $e_{i,j}e_{k,l}=e_{i,j}e_{k,i}=e_{k,i}e_{k,j}$ by~\eqref{eq:3}. If $k=i$, then $i=k<l<j$ and so $e_{i,j}e_{k,l}=e_{i,j}e_{i,l}=e_{i,l}e_{i,j}$ by~\eqref{eq:4} and~\eqref{eq:3}. The claim follows.
\end{proof}

Now we will show that every element from $M_n$ can be represented in the form
\begin{equation}\label{eq:the-form}
e_{i_1,j_1}e_{i_2,j_2}\cdots e_{i_k,j_k}
\end{equation}
for some $k\geq 0$, $2\leq j_1<j_2<\cdots<j_k\leq n$ and $1\leq i_s<j_s$ for all $s\leq k$.

By inductive arguments, to establish this it suffices to show that the product of the element~\eqref{eq:the-form} by any element $e_{i,j}$ is of the form~\eqref{eq:the-form}. So, let $\pi=e_{i_1,j_1}e_{i_2,j_2}\cdots e_{i_k,j_k}$ with all the above conditions on $i_s$ and $j_s$. If $j>j_k$, then $\pi e_{i,j}$ is of the required form. If $j=j_k$, then, using~\eqref{eq:5}, we have that $\pi e_{i,j}=\pi$ is again of the required form. So, let $j<j_k$. Then by Lemma~\ref{lm:simple-but-great} there exist $p,q,r<j_k$ such that $e_{i_k,j_k}e_{i,j}=e_{p,q}e_{r,j_k}$. Repeating this at most $k-1$ more times will bring $\pi e_{i,j}$ to the required form.

One now notices that there are exactly $n!$ different formal products~\eqref{eq:the-form}, which yields $|M_n|\leq n!$. But $|D_n|=n!$, and since there is an onto homomorphism from $M_n$ onto $D_n$, this completes the proof that $M_n\cong D_n$.

\begin{remark} From \cite{Abdullahi-thesis} we know that $PD_n$ is isomorphic to $D_{n+1}$ and so there is no need to give separate presentation for $PD_n$.
\end{remark}

\section{Presentation for $ID_n$}

As with $D_n$ and $PD_n$, a general study of the semigroup $ID_n$ was initiated in \cite{Abdullahi-thesis}, see also \cite{Abdullahi2}.

Let $e_i$ be the idempotent in $ID_n$ such that $\mathrm{dom}(e_i)=\mathrm{im}(e_i)=\{1,\ldots,n\}\setminus\{i\}$; and for $1\leq i<j\leq n$ let $b_{i,j}$ be the element of $ID_n$ with $\mathrm{dom}(b_{i,j})=\{1,\ldots,n\}\setminus\{i\}$ which maps $j$ to $i$, and fixes all remaining points. One checks that the $e_i$-s together with the $b_{i,j}$-s generate $ID_n$ (see~\cite{Abdullahi2}) and that they are subject to the following relations (under the mapping $f_i\mapsto e_i$ and $a_{i,j}\mapsto b_{i,j}$):
\begin{align}
f_i^2&=f_{i} &\label{eq:a}\\
f_{i}f_{j}&=f_jf_i & i\neq j\label{eq:b}\\
f_ka_{i,j}&=a_{i,j}f_k & i<j~\&~k\notin\{i,j\}\label{eq:c}\\
f_ia_{i,j}&=a_{i,j}f_j=a_{i,j} & i<j\label{eq:d}\\
f_ja_{i,j}&=a_{i,j}f_i=f_if_j & i<j\label{eq:e}\\
a_{i,j}a_{k,l}&=a_{k,l}a_{i,j} & \{i,j\}\cap\{k,l\}=\varnothing\label{eq:f}\\
a_{j,k}a_{i,j}&=f_ja_{i,k} & i<j<k.\label{eq:g}
\end{align}
We will prove that the monoid $M_n$, presented by the relations~\eqref{eq:a}--\eqref{eq:g}, is isomorphic to $ID_n$. From~\eqref{eq:c},~\eqref{eq:d},~\eqref{eq:e} and induction, we easily see that every element $w\in M_n$ is expressible as
\begin{equation*}
f_{i_1}f_{i_2}\cdots f_{i_k}u,
\end{equation*}
where $u\in\{a_{i,j}:i<j\}^{\ast}$, $k\leq n$.
\begin{lemma}\label{lm:aa}
\begin{enumerate}[(1)]
\item
$a_{i,j}a_{k,j}=f_ka_{i,j}$ if $i,k<j$ and $i\neq k$.
\item
$a_{i,j}a_{i,k}=f_ja_{i,k}$ if $i<j,k$ and $j\neq k$.
\item
$a_{i,j}a_{i,j}=f_if_j$ if $i<j$.
\end{enumerate}
\end{lemma}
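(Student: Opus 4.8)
The plan is to obtain all three identities purely as consequences of the idempotent relations \eqref{eq:a}--\eqref{eq:e}, with no appeal to \eqref{eq:f} or to the crucial relation \eqref{eq:g}; these three products collapse for a ``local'' reason, namely the way each letter $a_{i,j}$ interacts with the idempotents $f_i,f_j$. The two workhorses are \eqref{eq:d}, which lets me attach or remove an absorbing idempotent ($f_i a_{i,j}=a_{i,j}=a_{i,j}f_j$), and \eqref{eq:e}, which lets me \emph{collapse} a letter that meets an idempotent at its ``wrong'' end ($f_j a_{i,j}=a_{i,j}f_i=f_if_j$). The uniform strategy is: insert an absorbing idempotent between the two letters by \eqref{eq:d}, collapse the inner letter to a product of idempotents by \eqref{eq:e}, and then tidy up the loose idempotents using \eqref{eq:b}, \eqref{eq:c} and \eqref{eq:d}.

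Concretely, for \emph{(3)} I would compute $a_{i,j}a_{i,j}=a_{i,j}f_ja_{i,j}=a_{i,j}f_if_j=f_if_jf_j=f_if_j$, using \eqref{eq:d}, \eqref{eq:e} (twice) and \eqref{eq:a}. For \emph{(1)} the same template gives $a_{i,j}a_{k,j}=a_{i,j}f_ja_{k,j}=a_{i,j}f_kf_j=a_{i,j}f_jf_k=a_{i,j}f_k=f_ka_{i,j}$, where I collapse the inner $f_ja_{k,j}$ by \eqref{eq:e}, commute the two idempotents by \eqref{eq:b}, absorb $f_j$ by \eqref{eq:d}, and finally slide $f_k$ to the front by \eqref{eq:c}. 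For \emph{(2)} I would instead feed the absorbing idempotent in from the left: $a_{i,j}a_{i,k}=a_{i,j}f_ia_{i,k}=f_if_ja_{i,k}=f_jf_ia_{i,k}=f_ja_{i,k}$, using \eqref{eq:d}, \eqref{eq:e}, \eqref{eq:b} and \eqref{eq:d}.

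The only step that is not purely formal is the commutation \eqref{eq:c} at the end of \emph{(1)}, and this is exactly where the hypothesis is spent: $f_k$ commutes past $a_{i,j}$ precisely because $k\notin\{i,j\}$, which holds since $k<j$ forces $k\neq j$ and the assumption $i\neq k$ supplies $k\neq i$. I expect this index bookkeeping to be the main (and only) thing to watch; in \emph{(2)} the analogous role is played silently, since $f_i$ is absorbed rather than commuted, so the hypothesis $j\neq k$ there is needed only to keep the statement genuinely distinct from the square treated in \emph{(3)} (indeed, setting $k=j$ in \emph{(2)} and applying \eqref{eq:e} recovers \emph{(3)}). Finally I would record that, since \eqref{eq:g} was never invoked, these identities are available to simplify words before \eqref{eq:g} is brought to bear in the subsequent normal-form analysis.
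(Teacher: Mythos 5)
Your proof is correct and follows essentially the same route as the paper's: insert an absorbing idempotent via~\eqref{eq:d}, collapse with~\eqref{eq:e}, and tidy up with~\eqref{eq:a}, \eqref{eq:b}, \eqref{eq:c}; your computation for \emph{(3)} is just the mirror image of the paper's (inserting $f_j$ on the right instead of $f_i$ on the left), and you merely make explicit the commutations the paper leaves implicit. Your side observations --- that $i\neq k$ is spent exactly at the application of~\eqref{eq:c} in \emph{(1)}, and that~\eqref{eq:g} is never needed --- are also accurate.
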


\begin{proof}
(1). $a_{i,j}a_{k,j}=a_{i,j}f_ja_{k,j}=a_{i,j}f_jf_k=a_{i,j}f_k=f_ka_{i,j}$.

(2). $a_{i,j}a_{i,k}=a_{i,j}\cdot f_ia_{i,k}=f_if_ja_{i,k}=f_ja_{i,k}$.

(3). $a_{i,j}a_{i,j}=a_{i,j}f_ia_{i,j}=f_if_ja_{i,j}=f_if_j$.
\end{proof}

From Lemma~\ref{lm:aa} it follows that any word $w\in M_n$ can be expressed as
\begin{equation}\label{eq:baba}
f_{i_1}\cdots f_{i_k}a_{t_1,j_1}\cdots a_{t_r,j_r}
\end{equation}
with $2\leq j_1<\cdots<j_r\leq n$ and $t_s<j_s$ for all $s$.

Now we will prove that additionally we may assume that in~\eqref{eq:baba} all $t_i$'s are pairwise distinct. Indeed, let~\eqref{eq:baba} have a chunk of consecutive letters $a_{t_s,j_s}\cdots a_{t_p,j_p}$ such that $t_s=t_p$ and there is no $t_i$ equal to $t_s=t_p$ for $s<i<p$. Note also that $t_s=t_p\notin\{j_s,\ldots,j_p\}$. Hence by~\eqref{eq:d},~\eqref{eq:c} and~\eqref{eq:e} we have
\begin{eqnarray*}
a_{t_s,j_s}\cdots a_{t_p,j_p} &=& a_{t_s,j_s}\cdots a_{t_{p-1},j_{p-1}}f_{t_p}a_{t_p,j_p}\\
&=& a_{t_s,j_s}\cdots a_{t_{p-2},j_{p-2}}f_{t_p}a_{t_{p-1},j_{p-1}}a_{t_p,j_p}\\
&\vdots&\\
&=& a_{t_s,j_s}f_{t_p}a_{t_{s+1},j_{s+1}}\cdots a_{t_p,j_p}\\
&=& f_{t_s}f_{j_s}a_{t_{s+1},j_{s+1}}\cdots a_{t_p,j_p},
\end{eqnarray*}
and hence we turn the product~\eqref{eq:baba} to one with lesser number of entries of $a_{i,j}$'s, which allows by use of inductive reasoning to deduce that indeed in~\eqref{eq:baba} we may assume that all $t_i$'s are pairwise distinct.

Furthermore, in~\eqref{eq:baba} we may assume that $\{i_1,\ldots,i_k\}\cap\{j_1,\ldots,j_r,t_1,\ldots,t_r\}=\varnothing$. Indeed, otherwise, using relations~\eqref{eq:b},~\eqref{eq:c} and~\eqref{eq:d} and~\eqref{eq:e}, we could push the corresponding $f_i$ to the right of the word~\eqref{eq:baba} and either replace some of $a_{t_s,j_s}$ by some $f_q$ and move that newly introduced $f_q$ back to the left, or the corresponding $f_i$ vanishes.

Now, the element~\eqref{eq:baba} in $M_n$ with all the above conditions on $i_s$, $t_l$ and $j_l$, evaluated in $ID_n$, is the element which maps $j_l$ to $t_l$ for $l\leq r$ and any point from $\{1,\ldots,n\}\setminus\{i_1,\ldots,i_{k},j_1,\ldots,j_{r},t_1,\ldots,t_{r}\}$ identically. Any such element in $ID_n$ uniquely recovers the product~\eqref{eq:baba}. Hence $|M_n|\leq |ID_n|$, but of course $|M_n|\geq |ID_n|$ and so $M_n\cong ID_n$, as required.

\section{Presentation for $C_n$}

The semigroup $C_n$ also known as the Catalan monoid because $\mid C_n\mid$ is the $n$-th Catalan number was first studied by Higgins and it also arose in language theory \cite{Higgins}. We provide for completeness the following result, the proof of which the reader can find in~\cite{SashaVolodia} and~\cite{Solomon}. The monoid $C_n$ is presented by
\begin{eqnarray}
\bigl\langle e_{i},~1\leq i\leq n-1 &:& e_i^2=e_{i}\label{eq:11}\\
&& e_{i}e_{j}=e_je_i\quad\text{if}~|i-j|\geq 2\label{eq:12}\\
&& e_ie_{i+1}e_i=e_{i+1}e_i\label{eq:13}\\
&& e_{i+1}e_{i}e_{i+1}=e_{i+1}e_i\bigr\rangle.\label{eq:14}
\end{eqnarray}

\section{Presentation for $IC_n$}

The semigroup $IC_n$ first appeared in \cite{Gan} and not much is known about it.
For $i\leq n$ let $f_i$ be the idempotent in $IC_n$ with $\mathrm{dom}(f_i)=\mathrm{im}(f_i)=\{1,\ldots,n\}\setminus\{i\}$; and for $i\leq n-1$ let $b_i$ be the element of $IC_n$ which maps $i+1$ to $i$ and fixes all the points from $\{1,\ldots,n\}\setminus\{i,i+1\}$.
\begin{lemma}
$IC_n=\langle f_i,~i\leq n;~b_i,~i\leq n-1\rangle$.
\end{lemma}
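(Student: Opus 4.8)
The plan is to show that the given $f_i$-s and $b_i$-s generate $IC_n$ by proving that every element $\alpha \in IC_n$ can be written as a product of these generators. Recall that $IC_n$ consists of the partial one-to-one maps that are simultaneously order-preserving and order-decreasing. The key structural fact I would exploit is that such a map $\alpha$ is determined by its domain together with the images it assigns; because $\alpha$ is one-to-one, order-preserving, and order-decreasing, the images are tightly constrained. First I would observe that the idempotents $f_i$ generate all the restrictions of the identity, i.e.\ the partial identity on any subset of $[n]$ is a product of suitable $f_i$-s (this is immediate since the $f_i$ commute and are idempotent, and $f_{i_1}\cdots f_{i_k}$ is precisely the identity on $[n]\setminus\{i_1,\ldots,i_k\}$).

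Next I would analyze the genuinely non-identity part. The crucial remark is that $b_i$ implements an elementary ``downward shift'': it sends $i+1 \mapsto i$ while removing $i+1$ from the domain and $i+1$ from the image. Composing the $b_i$-s should let me realize any single order-decreasing displacement $j \mapsto j-t$, and more generally I would argue by induction that an arbitrary $\alpha \in IC_n$ factors as a product of a partial identity (built from $f_i$-s) followed by a sequence of elementary shifts (built from $b_i$-s). Concretely, I would proceed by induction on some complexity measure of $\alpha$, for instance the total displacement $\sum_{x \in \mathrm{dom}(\alpha)} (x - x\alpha)$, or alternatively on the size of the domain: given $\alpha \neq \mathrm{id}$, locate the smallest point $x$ with $x\alpha < x$, show that $\alpha$ can be written as $b_{x-1}\cdot\alpha'$ (or $\alpha'\cdot b_{x-1}$) where $\alpha'$ has strictly smaller complexity, and invoke the inductive hypothesis. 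The order-preserving and order-decreasing constraints are exactly what guarantee that such a factorization stays inside $IC_n$ at each step.

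The main obstacle I anticipate is bookkeeping the domains and images correctly through the factorization. Because these are \emph{partial} maps, multiplying by $b_i$ not only shifts a point but also shrinks the domain and image, so I must verify that at each inductive step the residual map $\alpha'$ is still a legitimate element of $IC_n$ with the correct (smaller) complexity, and that no point is inadvertently lost from or added to the domain. Handling the interaction between the support of $\alpha$ and the extra points deleted by the $b_i$-s — and correcting any discrepancy by pre- or post-multiplying by an appropriate idempotent $f_j$ — is the delicate part. A clean way to manage this is to first strip $\alpha$ down to a map whose domain and image are both full except on a controlled set, reduce to shifting a single element at a time, and only at the end restore the correct domain using the $f_i$-s; the commutation and absorption relations among the $f_i$-s and $b_i$-s (which will later be the defining relations) make these adjustments routine once the shift decomposition is in place.
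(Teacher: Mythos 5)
Your plan is correct and, once unrolled, produces essentially the factorization the paper writes down, but the two arguments are packaged differently: the paper gives a closed-form product with no induction, while you peel off one generator at a time. The paper's proof sets $F$ to be the fixed-point set of $f$, lists the moved domain points $j_1<\cdots<j_p$ and the moved image points $t_1<\cdots<t_p$, notes that order-preservation and injectivity force $j_s\mapsto t_s$ with $t_s\leq j_s$, and then exhibits $f$ directly as $\prod_{x\in[n]\setminus\mathrm{dom}(f)}f_x$ times one descending run of $b$'s per moved point --- exactly your ``elementary shifts''. What the explicit formula buys is that it is precisely the normal form the rest of the section rests on; what your induction buys is that the domain bookkeeping is localized to a single one-generator step rather than verified for one large product. (As an aside, since $b_i$ moves $i+1$ to $i$, the run carrying $j_s$ down to $t_s$ is $b_{j_s-1}b_{j_s-2}\cdots b_{t_s}$; the paper's displayed runs start one index too high, a slip your step-by-step peeling avoids automatically.) Three of your hedges need settling, however. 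First, induct on total displacement, not on $|\mathrm{dom}(\alpha)|$: the peel $\alpha=b_{x-1}\alpha'$ trades $x$ for $x-1$ in the domain and leaves its size unchanged. Second, take $x$ to be the \emph{least} moved point, with $x\alpha=y$: minimality together with order-preservation and injectivity shows that no point of $[y,x)$ lies in $\mathrm{dom}(\alpha)$ (such a $z$ would be fixed by minimality, yet $z<x$ forces $z\alpha<x\alpha=y$), in particular $x-1\notin\mathrm{dom}(\alpha)$; this is exactly the ``delicate part'' you flagged without verifying, and it is what makes $\alpha'$ (sending $x-1$ to $y$ and agreeing with $\alpha$ elsewhere) a legitimate element of $IC_n$ with displacement one smaller. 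Third, your alternative right-hand peel $\alpha'\cdot b_{x-1}$ fails at the least moved point: for $\alpha\colon 2\mapsto 1,\ 3\mapsto 2$ there is no $\alpha'$ with $\alpha=\alpha'b_1$ at all, since $2\in\mathrm{im}(\alpha)$ but $2\notin\mathrm{im}(b_1)$; peeling on the right would have to start from the largest moved point instead. With these choices fixed, your induction terminates at a partial identity, which your first observation writes as a product of the commuting $f_i$'s, and the proof is complete.
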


\begin{proof}
Let $f\in IC_n$. Let $F$ be the set of fixed points of $f$. Let $\mathrm{dom}(f)\setminus F$ consist of the points $j_1<\cdots<j_p$, and $\mathrm{im}(f)\setminus F$ consist of the points $t_1<\cdots<t_p$. Then one has $t_s\leq j_s$. Then one easily calculates that $$f=\prod_{x\in[n]\setminus\mathrm{dom}(f)}f_x\cdot(b_{j_1}\cdots b_{t_1})\cdots(b_{j_p}\cdots b_{t_p}).$$
\end{proof}
Also one sees that the $f_i$-s together with the $b_i$-s
satisfy the following relations (under the mapping $e_i\mapsto f_i$, $a_i\mapsto b_i$):
\begin{align}
e_i^2 &=e_{i}&\label{eq:21}\\
e_{i}e_{j}&=e_je_i&\label{eq:22}\\
e_ia_j&=a_je_i & \text{if}~i<j~\text{or}~i>j+1\label{eq:23}\\
a_ia_j&=a_ja_i & \text{if}~|i-j|\geq 2\label{eq:24}\\
e_ia_i &=a_ie_{i+1}=a_i & i\leq n-1\label{eq:25}\\
e_{i+1}a_i &=a_ie_i=e_ie_{i+1} & i\leq n-1.\label{eq:26}
\end{align}
We will prove that the monoid $M_n$, presented by the relations~\eqref{eq:21}--\eqref{eq:26}, is isomorphic to $IC_n$.
We proceed with the following
\begin{lemma}\label{lm:lalala}
\begin{enumerate}[(1)]
\item
$a_{i+1}a_ia_{i+1}=a_{i+1}a_i$.
\item
$a_ia_i=e_ie_{i+1}$.
\item
$a_ia_{i+1}a_i=a_{i+1}a_i$.
\end{enumerate}
\end{lemma}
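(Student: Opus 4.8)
The plan is to derive all three identities purely formally inside $M_n$ from the defining relations \eqref{eq:21}--\eqref{eq:26}, viewing each as a sequence of word rewrites. The guiding principle is that \eqref{eq:25} lets an $a_i$ absorb an $e_i$ on its left or an $e_{i+1}$ on its right, that \eqref{eq:26} lets an $e_{i+1}$ on the left or an $e_i$ on the right of $a_i$ collapse into the idempotent $e_ie_{i+1}$, and that \eqref{eq:23} lets an idempotent whose index is ``far'' from $a_j$ commute past it. Thus the uniform strategy in each case is: manufacture an idempotent out of thin air by reading \eqref{eq:25} from right to left, shuttle it across the word using \eqref{eq:23}, and then annihilate the now-superfluous letter with \eqref{eq:26} and \eqref{eq:25}.

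Part (2) is the immediate case. Starting from $a_ia_i$, I would rewrite the first factor as $a_ie_{i+1}$ using \eqref{eq:25}, replace $e_{i+1}a_i$ by $e_ie_{i+1}$ using \eqref{eq:26}, then replace the remaining $a_ie_i$ by $e_ie_{i+1}$ again by \eqref{eq:26}, and finish with $e_{i+1}^2=e_{i+1}$ from \eqref{eq:21}.

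For part (1) I would expand the leading $a_{i+1}$ as $a_{i+1}e_{i+2}$ via \eqref{eq:25}; since the index $i+2$ is far from $a_i$, relation \eqref{eq:23} carries $e_{i+2}$ to the right of $a_i$, and then \eqref{eq:26} (applied with index $i+1$) turns $e_{i+2}a_{i+1}$ into $e_{i+1}e_{i+2}$. What remains is $a_{i+1}a_ie_{i+1}e_{i+2}$, whose trailing idempotents peel off one at a time: $a_ie_{i+1}=a_i$ by \eqref{eq:25}, then $a_ie_{i+2}=e_{i+2}a_i$ by \eqref{eq:23} and $a_{i+1}e_{i+2}=a_{i+1}$ by \eqref{eq:25}, leaving $a_{i+1}a_i$. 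Part (3) is the mirror image: here I would write the trailing $a_i$ as $e_ia_i$ via \eqref{eq:25}, commute the freshly created $e_i$ leftward past $a_{i+1}$ with \eqref{eq:23} (legal since $i<i+1$), collapse $a_ie_i$ to $e_ie_{i+1}$ with \eqref{eq:26}, absorb $e_{i+1}$ into $a_{i+1}$ with \eqref{eq:25}, and finally commute $e_i$ back out with \eqref{eq:23} and absorb it with \eqref{eq:25}.

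The only real obstacle is psychological rather than technical: at no stage can one shorten these words by a direct reduction, so one must first \emph{lengthen} them by inserting an idempotent, reading \eqref{eq:25} from right to left. Once that move is spotted, each derivation is a short forced chain. A useful sanity check before committing to the rewriting is to evaluate both sides directly in $IC_n$ --- for instance confirming that $b_{i+1}b_ib_{i+1}$ and $b_{i+1}b_i$ both send $i+2\mapsto i$, fix $[n]\setminus\{i,i+1,i+2\}$, and have domain $[n]\setminus\{i,i+1\}$ --- which pins down the target word and reveals precisely which idempotent must be manufactured.
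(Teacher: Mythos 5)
Your proof is correct and takes essentially the same route as the paper's: in each part you insert an idempotent by reading \eqref{eq:25} backwards, shuttle it with \eqref{eq:23}, collapse with \eqref{eq:26}, and reabsorb via \eqref{eq:25} and \eqref{eq:21}, and your chains for (1) and (3) coincide step for step with the paper's. Part (2) differs only in inserting $e_{i+1}$ to the right of the first $a_i$ rather than $e_i$ to the left of the second, a trivially symmetric variant of the same argument.
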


\begin{proof}
(1).
\begin{multline*}
a_{i+1}a_ia_{i+1}=a_{i+1}e_{i+2}a_ia_{i+1}=a_{i+1}a_ie_{i+2}a_{i+1}=a_{i+1}a_ie_{i+1}e_{i+2}=\\
a_{i+1}a_ie_{i+2}=
a_{i+1}e_{i+2}a_i=a_{i+1}a_i.
\end{multline*}

(2).
\begin{equation*}
a_ia_i=a_ie_ia_i=e_ie_{i+1}a_i=e_ie_ie_{i+1}=e_ie_{i+1}.
\end{equation*}

(3).
\begin{multline*}
a_ia_{i+1}a_i=a_ia_{i+1}e_ia_i=a_ie_ia_{i+1}a_i=e_ie_{i+1}a_{i+1}a_i=\\
e_ia_{i+1}a_i=a_{i+1}e_ia_i=a_{i+1}a_i.
\end{multline*}
\end{proof}

Now, one easily sees from~\eqref{eq:23},~\eqref{eq:25} and~\eqref{eq:26} that every element $w\in M_n$ can be expressed as
\begin{equation*}
e_{i_1}\cdots e_{i_k}a_{j_1}\cdots a_{j_p}.
\end{equation*}
Actually we will prove that any $w\in M_n$ can be represented as
\begin{equation}\label{eq:hahaha}
\pi=e_{i_1}\cdots e_{i_k}(a_{j_1}\cdots a_{t_1})\cdots(a_{j_p}\cdots a_{t_p})
\end{equation}
with $j_1<j_2<\cdots<j_p$; $t_1<t_2<\cdots<t_p$; $t_s\leq j_s$.

To prove this, we proceed by induction and let $i$ be arbitrary to consider $\pi a_i$. If $i>j_p$, then $\pi a_i$ is of the required form. So let $i\leq j_p$. If $t_p\leq i$, then by Lemma~\ref{lm:lalala} and~\eqref{eq:24} we can use induction and bring $\pi a_i$ to the required form. Thus let $i<t_p$. If $i<t_p-1$, then
\begin{equation*}
\pi a_i=e_{i_1}\cdots e_{i_k}(a_{j_1}\cdots a_{t_1})\cdots(a_{j_{p-1}}\cdots a_{t_{p-1}})a_i(a_{j_{p}}\cdots a_{t_{p}}),
\end{equation*}
and we further bring $\pi a_i$ to the required form. So let $i=t_p-1$. If $t_p-1>t_{p-1}$, then $\pi a_i$ is in the needed form. Thus, let finally $t_p-1=t_{p-1}$. Then by Lemma~\ref{lm:lalala}
\begin{eqnarray*}
\pi a_i &=& e_{i_1}\cdots e_{i_k}(a_{j_1}\cdots a_{t_1})\cdots(a_{j_{p-2}}\cdots a_{t_{p-2}})(a_{j_{p-1}}\cdots a_{t_{p-1}+1})(a_{j_{p}}\cdots a_{t_{p}-1}a_{t_p}a_{t_p-1})\\
&=& e_{i_1}\cdots e_{i_k}(a_{j_1}\cdots a_{t_1})\cdots(a_{j_{p-2}}\cdots a_{t_{p-2}})(a_{j_{p-1}}\cdots a_{t_{p-1}+1})(a_{j_{p}}\cdots a_{t_p-1})\\
&\vdots&\\
&=& e_{i_1}\cdots e_{i_k}(a_{j_1}\cdots a_{t_1})\cdots(a_{j_{p-2}}\cdots a_{t_{p-2}})(a_{j_{p}}\cdots a_{t_p-1})
\end{eqnarray*}
is in the required form.

Additionally we may require that in~\eqref{eq:hahaha} the elements $i_s$ do not coincide with any of the indices $i$ of $a_i$ appearing in~\eqref{eq:hahaha}, and do not coincide with any of $j_1+1,\ldots,j_p+1$. Then all such products~\eqref{eq:hahaha} evaluated in $IC_n$ are pairwise distinct. Hence $M_n\cong IC_n$.

\section{Presentation for $PC_n$}

The semigroup $PC_n$ also known as the Schr\"{o}der monoid because $\mid PC_n\mid$ is the $n-$th (double) Schr\"{o}der number,
first appeared in \cite{Lar}) and it also arose in language theory \cite{Higgins}.

Let $a_i$ be the idempotent from $PC_n$ with $\mathrm{dom}(a_i)=\mathrm{im}(a_i)=\{1,\ldots,n\}\setminus\{i\}$; and for $i\leq n-1$ let $b_i$ be the idempotent from $PC_n$ which maps $i+1$ to $i$ and fixes all remaining points. Then the $a_i$-s together with the $b_i$-s generate $PC_n$, and are subject to the following relations (under the mapping $f_i\mapsto a_i$, $e_{i}\mapsto b_{i}$):
\begin{align}
e_i^2 &=e_{i} &\label{eq:y1}\\
e_{i}e_{j}&=e_je_i & \text{if}~|i-j|\geq 2\label{eq:y2}\\
e_ie_{i+1}e_i &=e_{i+1}e_i &\label{eq:y3}\\
e_{i+1}e_{i}e_{i+1} &=e_{i+1}e_i &\label{eq:y4}\\
f_i^2 &=f_i &\label{eq:y5}\\
f_if_j &=f_jf_i &\label{eq:y6}\\
f_je_i &=e_if_j & \mathrm{if}~j<i~\mathrm{or}~j>i+1\label{eq:y7}\\
f_{i+1}e_i &=f_{i+1} &\label{eq:y8}\\
e_if_{i+1} &=e_{i} &\label{eq:y9}\\
e_if_i &=f_if_{i+1}.&\label{eq:y10}
\end{align}
Similarly to the cases we treated above,
one shows that every element of $M_n$ can be expressed as
\begin{equation}\label{eq:Rinzai!}
\pi=f_{p_1}\cdots f_{p_r}(e_{j_1}e_{j_1-1}\cdots e_{i_1})(e_{j_2}e_{j_2-1}\cdots e_{i_2})\cdots(e_{j_k}e_{j_k-1}\cdots e_{i_k}),
\end{equation}
where $1\leq i_1<i_2<\cdots<i_k$; $j_1<j_2<\cdots<j_k$; $i_s\leq j_s$ for all $s$; $k\geq 0$; and $\{p_1,\ldots,p_r\}\cap\{j_1+1,\ldots,j_k+1\}=\varnothing$.

Then distinct words of the form~\eqref{eq:Rinzai!}, evaluated in
$PC_n$ are pairwise distinct and so $M_n\cong PC_n$.
Thus, the monoid, presented by~\eqref{eq:y1}--\eqref{eq:y10}, is isomorphic to $PC_n$.


\begin{thebibliography}{3}

\bibitem{trio}
D.~Easdown, J.~East, D.~G.~FitzGerald, A presentation of the dual symmetric inverse monoid, \emph{Internat. J. Algebra Comput.} {\bf 18} (2008) 357-–374.

\bibitem{East5}
J.~East, A presentation of the singular part of the symmetric inverse monoid, \emph{Comm. Algebra} {\bf 34} (2006) 1671-–1689.

\bibitem{East4}
J.~East, Presentations for singular subsemigroups of the partial transformation semigroup, \emph{Internat. J. Algebra Comput.} {\bf 20} (2010) 1-–25.

\bibitem{East3}
J.~East, A presentation for the singular part of the full transformation semigroup, \emph{Semigroup Forum} {\bf 81} (2010) 357-–379.

\bibitem{East2}
J.~East, On the singular part of the partition monoid, \emph{Internat. J. Algebra Comput.} {\bf 21} (2011) 147-–178.

\bibitem{East1}
J.~East, Generators and relations for partition monoids and algebras, \emph{J. Algebra} {\bf 339} (2011) 1-–26.

\bibitem{Vitor}
V.~H.~Fernandes, G.~M.~S.~Gomes, M.~M.~Jesus, Presentations for some monoids of partial transformations on a finite chain, \emph{Comm. Algebra} {\bf 33} (2005) 587-–604.

\bibitem{Vitor2}
V.~H.~Fernandes, G.~M.~S.~Gomes, M.~M.~Jesus, Presentations for some monoids of injective partial transformations on a finite chain, \emph{Southeast Asian Bull. Math.} {\bf 28} (2004) 903-–918.

\bibitem{Gan}
O.~Ganyushkin, V.~Mazorchuk, {\em Classical Finite
Transformation Semigroups: An Introduction}, Springer, London, 2009.

\bibitem{SashaVolodia}
O.~Ganyushkin, V.~Mazorchuk, On Kiselman quotients of $0$-Hecke monoids, \emph{Int. Electron. J. Algebra} {\bf 10} (2011) 174--191.

\bibitem{Higgins}
P.~M.~Higgins, A proof of Simon's theorem on piecewise testable languages. {\em Theoret. Comput. Sci.} {\bf
178} (1997), no. 1-2, 257–264.

\bibitem{Ania}
G.~Kudryavtseva, V.~Mazorchuk, On presentations of Brauer-type monoids, \emph{Cent. Eur. J. Math.} {\bf 4} (2006) 413-–434.

\bibitem{Lar}
A.~Laradji, A.~Umar, Combinatorial results for semigroups
of order-decreasing partial transformations. {\em J. Integer Seq.}
{\bf 7} (2004), Article 04.3.8.

\bibitem{Me}
V.~Maltcev, Topics in Combinatorial Semigroup Theory, Ph.D. Thesis, University of St Andrews, 2012.

\bibitem{Mazor}
V.~Maltcev, V.~Mazorchuk, Presentation of the singular part of the Brauer monoid, \emph{Math. Bohem.} {\bf 132} (2007) 297-–323.

\bibitem{Stei}
V.~Mazorchuk, B.~Steinberg, Double Catalan monoids, \emph{J. Alg. Comb.} {\bf 36} (2012) 333--354.

\bibitem{Nik}
N.~Ru\v skuc, Matrix semigroups — generators and relations, \emph{Semigroup Forum} {\bf 51} (1995) 319-–333.

\bibitem{Nik-thesis}
N.~Ru\v skuc, Semigroup Presentations, Ph.D. Thesis, University of St Andrews, 1995.

\bibitem{Solomon}
A.~Solomon, Catalan monoids, monoids of local endomorphisms, and their presentations, \emph{Semigroup Forum} {\bf 53} (1996) 351--368.

\bibitem{Abdullahi}
A.~Umar, On the semigroups of order-decreasing finite full transformations,
\emph{Proc. Roy. Soc. Edinburgh Sect. A} {\bf 120} (1992) 129--142.

\bibitem{Abdullahi2}
A.~Umar, On the semigroups of partial order-decreasing finite transformations,
\emph{Proc. Roy. Soc. Edinburgh Sect. A} {\bf 123} (1993) 355--363.

\bibitem{Abdullahi-thesis}
A.~Umar, Semigroups of Order-decreasing Transformations, Ph.D. Thesis, University of St Andrews, 1992.

\end{thebibliography}
\end{document}